\newtheorem{thm}{Theorem}[section]
\newtheorem{lem}[thm]{Lemma}
\theoremstyle{definition}
\theoremstyle{remark}
\newtheorem{rem}[thm]{Remark}
\theoremstyle{conclusion}
\theoremstyle{question}
\numberwithin{equation}{section}
\begin{document}
\title[Critical order Lane-Emden-Hardy equations in $R^n$]{Liouville type theorem for critical order Lane-Emden-Hardy equations in $\mathbb{R}^n$}

\author{Wenxiong Chen, Wei Dai, Guolin Qin}

\address{Department of Mathematics, Yeshiva University, New York, NY, USA}
\email{wchen@yu.edu}

\address{School of Mathematics and Systems Science, Beihang University (BUAA), Beijing 100083, P. R. China}
\email{weidai@buaa.edu.cn}

\address{School of Mathematics and Systems Science, Beihang University (BUAA), Beijing 100083, P. R. China}
\email{qinbuaa@foxmail.com}

\thanks{The first author is partially supported by the Simons Foundation Collaboration Grant for Mathematicians 245486. The second author is supported by the NNSF of China (No. 11501021).}

\begin{abstract}
In this paper, we are concerned with the critical order Lane-Emden-Hardy equations
\begin{equation*}
  (-\Delta)^{\frac{n}{2}}u(x)=\frac{u^{p}(x)}{|x|^{a}} \,\,\,\,\,\,\,\,\,\,\,\, \text{in} \,\,\, \mathbb{R}^{n}
\end{equation*}
with $n\geq4$ is even, $0\leq a<n$ and $1<p<+\infty$. We prove Liouville theorem for nonnegative classical solutions to the above Lane-Emden-Hardy equations (Theorem \ref{Thm0}), that is, the unique nonnegative solution is $u\equiv0$. Our result seems to be the first Liouville theorem on the critical order equations in higher dimensions ($n\geq3$).
\end{abstract}
\maketitle {\small {\bf Keywords:} Critical order; Lane-Emden-Hardy equations; Liouville theorem; Nonnegative solutions; Super poly-harmonic properties. \\

{\bf 2010 MSC} Primary: 35B53; Secondary: 35J61, 35J91.}

\section{Introduction}

In this paper, we investigate the Liouville property of nonnegative solutions to the following critical order Lane-Emden-Hardy equations
\begin{equation}\label{PDE}\\\begin{cases}
(-\Delta)^{\frac{n}{2}}u(x)=\frac{u^{p}(x)}{|x|^{a}} \,\,\,\,\,\,\,\,\,\, \text{in} \,\,\, \mathbb{R}^{n}, \\
u(x)\geq0, \,\,\,\,\,\,\,\, x\in\mathbb{R}^{n},
\end{cases}\end{equation}
where $u\in C^{n}(\mathbb{R}^{n})$ if $a=0$, $u\in C^{n}(\mathbb{R}^{n}\setminus\{0\})\cap C^{n-2}(\mathbb{R}^{n})$ if $0<a<n$, $n\geq4$ is even and $1<p<+\infty$.

For $0<\alpha\leq n$, PDEs of the form
\begin{equation}\label{GPDE}
  (-\Delta)^{\frac{\alpha}{2}}u(x)=|x|^{a}u^{p}(x)
\end{equation}
are called the fractional order or higher order Hardy (Lane-Emden, H\'{e}non) equations for $a<0$ ($a=0$, $a>0$, respectively), which have many important applications in conformal geometry and Sobolev inequalities. We say equations \eqref{GPDE} have critical order if $\alpha=n$ and non-critical order if $0<\alpha<n$. Being essentially different from the non-critical order equations, the fundamental solution $c_{n}\ln\frac{1}{|x-y|}$ of $(-\Delta)^{\frac{n}{2}}$ changes its signs in critical order case $\alpha=n$, thus the integral representation in terms of the fundamental solution can't be deduced directly from the super poly-harmonic properties. Liouville type theorems for equations \eqref{GPDE} (i.e., nonexistence of nontrivial nonnegative solutions) have been quite extensively studied (see \cite{BG,CD,CFY,CL,CL1,DQ1,DQ,GS,Lin,MP,P,PS,WX} and the references therein). It is crucial in establishing a priori estimates and existence of positive solutions for non-variational Dirichlet problems of a class of elliptic equations (see \cite{GS1,PQS}).

In the special case $a=0$, equation \eqref{GPDE} becomes the well-known Lane-Emden equation, which also arises as a model in astrophysics. For $\alpha=2$ and $1<p<p_{s}:=\frac{n+2}{n-2}$ ($:=\infty$ if $n=2$), Liouville type theorem was established by Gidas and Spruck in their celebrated article \cite{GS}. Later, the proof was simplified to a large extent by Chen and Li in \cite{CL} using the Kelvin transform and the method of moving planes (see also \cite{CL1}). For $n>\alpha=4$ and $1<p<\frac{n+4}{n-4}$, Lin \cite{Lin} proved the Liouville type theorem for all the nonnegative $C^{4}(\mathbb{R}^{n})$ smooth solutions of \eqref{GPDE}. When $\alpha\in(0,n)$ is an even integer and $1<p<\frac{n+\alpha}{n-\alpha}$, Wei and Xu established Liouville type theorem for all the nonnegative $C^{\alpha}(\mathbb{R}^{n})$ smooth solutions of \eqref{GPDE} in \cite{WX}. For general $a\neq0$, $0<\alpha<n$, $0<p<\frac{n+\alpha+2a}{n-\alpha}$ ($1<p<+\infty$ if $\alpha=n=2$), there are also lots of literatures on Liouville type theorems for general fractional order or higher order Hardy-H\'{e}non equations \eqref{GPDE}, for instance, Bidaut-V\'{e}ron and Giacomini \cite{BG}, Chen and Fang \cite{CF}, Dai and Qin \cite{DQ1}, Gidas and Spruck \cite{GS}, Mitidieri and Pohozaev \cite{MP}, Phan \cite{P}, Phan and Souplet \cite{PS} and many others. For Liouville type theorems on systems of PDEs of type \eqref{GPDE} with respect to various types of solutions (e.g., stable, radial, nonnegative, sign-changing, $\cdots$), please refer to \cite{BG,DQ,FG,M,P,PQS,S,SZ} and the references therein.

For the critical nonlinearity cases $p=\frac{n+\alpha}{n-\alpha}$ with $a=0$ and $0<\alpha<n$, the quantitative and qualitative properties of solutions to fractional order or higher order equations \eqref{GPDE} have also been widely studied. In the special case $n>\alpha=2$, equation \eqref{GPDE} becomes the well-known Yamabe problem (for related results, please see Gidas, Ni and Nirenberg \cite{GNN1,GNN}, Caffarelli, Gidas and Spruck \cite{CGS} and the references therein). For $n>\alpha=4$, Lin \cite{Lin} classified all the positive $C^{4}$ smooth solutions of \eqref{GPDE}. In \cite{WX}, among other things, Wei and Xu proved the classification results for all the positive $C^{\alpha}$ smooth solutions of \eqref{GPDE} when $\alpha\in(0,n)$ is an even integer. For $n>\alpha=3$, Dai and Qin \cite{DQ1} classified the positive $C^{3,\epsilon}_{loc}\cap\mathcal{L}_{1}$ classical solutions of \eqref{GPDE}. In \cite{CLO}, by developing the method of moving planes in integral forms, Chen, Li and Ou classified all the positive $L^{\frac{2n}{n-\alpha}}_{loc}$ solutions to the equivalent integral equation of the PDE \eqref{GPDE} for general $\alpha\in(0,n)$, as a consequence, they obtained the classification results for positive weak solutions to PDE \eqref{GPDE}. Subsequently, Chen, Li and Li \cite{CLL} developed a direct method of moving planes for fractional Laplacians $(-\Delta)^{\frac{\alpha}{2}}$ with $0<\alpha<2$ and classified all the $C^{1,1}_{loc}\cap\mathcal{L}_{\alpha}$ positive solutions to the PDE \eqref{GPDE} directly as an application, where the function space
\begin{equation}\label{0-0}
  \mathcal{L}_{\alpha}(\mathbb{R}^{n}):=\Big\{f: \mathbb{R}^{n}\rightarrow\mathbb{R}\,\big|\,\int_{\mathbb{R}^{n}}\frac{|f(x)|}{1+|x|^{n+\alpha}}dx<\infty\Big\}.
\end{equation}
In the limiting (i.e., critical order) case $n=\alpha=2$, Chen and Li \cite{CL1} classified all the $C^{2}$ smooth solutions with finite total curvature of the equation
\begin{equation}\label{0-1}\\\begin{cases}
-\Delta u=e^{2u}, \,\,\,\,\,\,\,\, x\in\mathbb{R}^{2}, \\
\int_{\mathbb{R}^{2}}e^{2u}dx<\infty.
\end{cases}\end{equation}
In general, when $\alpha=n$, under some assumptions, Chang and Yang \cite{CY} classified the smooth solutions to the critical order equations
\begin{equation}\label{0-3}
  (-\Delta)^{\frac{n}{2}}u=(n-1)!e^{nu}.
\end{equation}
When $n=\alpha=4$, Lin \cite{Lin} proved the classification results for all the $C^{4}$ smooth solutions of
\begin{equation}\label{0-2}\\\begin{cases}
\Delta^{2}u=6e^{4u}, \,\,\,\,\,\,\,\, x\in\mathbb{R}^{4}, \\
\int_{\mathbb{R}^{4}}e^{4u}dx<\infty, \,\,\,\,\,\, u(x)=o(|x|^{2}) \,\,\,\, \text{as} \,\,\,\, |x|\rightarrow\infty.
\end{cases}\end{equation}
When $\alpha=n$ is an even integer, Wei and Xu \cite{WX} classified all the $C^{n}$ smooth solutions of \eqref{0-3} with finite total curvature (i.e., $\int_{\mathbb{R}^{n}}e^{nu}dx<\infty$) under the assumption $u(x)=o(|x|^{2})$ as $|x|\rightarrow\infty$. Recently, under the same assumption $u(x)=o(|x|^{2})$ as $|x|\rightarrow\infty$, Chen and Zhang \cite{CZ} classified all the smooth solutions of the critical order equations \eqref{0-3} with finite total curvature for arbitrary space dimensions $n$ (no matter $n$ is even or odd) via a unified approach. In particular, one should note that, when $n$ is odd, \eqref{0-3} is a fractional equation of nonlocal nature. For more literatures on the quantitative and qualitative properties of solutions to fractional order or higher order conformally invariant PDE and IE problems, please refer to \cite{CD,CL1,CZ,DFHQW,DFQ,Zhu} and the references therein.

One should observe that, all the literatures on Liouville type theorems for PDE \eqref{GPDE} mentioned above are focused on the non-critical order and subcritical nonlinearity cases $0<\alpha<n$ and $1<p<\frac{n+\alpha+2a}{n-\alpha}$ except $n=2$. In this paper, we will establish Liouville type theorem for nonnegative classical solutions of \eqref{GPDE} in critical order cases, that is, $\alpha=n\geq4$ is even and $1<p<+\infty$. Our theorem seems to be the first result on this problem.

Our Liouville type result for \eqref{PDE} is the following theorem.
\begin{thm}\label{Thm0}
Assume $n\geq4$ is even, $0\leq a<n$, $1<p<+\infty$ and $u$ is a nonnegative solution of \eqref{PDE}. If one of the following two assumptions
\begin{equation*}
  0\leq a\leq2+2p \,\,\,\,\,\,\,\,\,\,\,\, \text{or} \,\,\,\,\,\,\,\,\,\,\,\, u(x)=o(|x|^{2}) \,\,\,\, \text{as} \,\, |x|\rightarrow+\infty
\end{equation*}
holds, then $u\equiv0$ in $\mathbb{R}^{n}$.
\end{thm}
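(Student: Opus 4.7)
My plan is a two-step scheme that has become standard for Liouville-type results on higher order Lane-Emden-Hardy problems, but the critical order setting $\alpha = n$ forces nontrivial modifications. The first step is to establish the \emph{super poly-harmonic property}: any nonnegative classical solution $u$ of \eqref{PDE} satisfies
$$
(-\Delta)^{i} u(x) \geq 0 \qquad \text{for all } x \in \mathbb{R}^{n}\setminus\{0\},\ i=1,2,\ldots,\tfrac{n}{2}-1.
$$
I would prove this by contradiction. Setting $v_{i} := (-\Delta)^{i}u$ (so $v_{0}=u$, $v_{n/2}=u^{p}/|x|^{a}\ge 0$, and $-\Delta v_{i-1}=v_{i}$), one passes to the spherical averages $\bar{v}_{i}(r)$, obtaining a chain of second-order radial ODEs. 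If some $\bar{v}_{i_{0}}$ were strictly negative on a half-line $[R_{0},\infty)$, successive integration of $-(r^{n-1}\bar{v}_{i-1}'(r))' = r^{n-1}\bar{v}_{i}(r)$ down to $i=0$ would force $\bar{u}(r)$ to grow strictly faster than $r^{2}$, contradicting either the hypothesis $u(x)=o(|x|^{2})$, or, when only $0\le a\le 2+2p$ is assumed, a polynomial upper bound on $\bar{u}$ produced by iterating the equation and exploiting the admissible range of $a$.

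Once super poly-harmonicity is in hand, I would use it to extract strong monotonicity of the spherical averages. The radial identity
$$
-(r^{n-1}\bar{v}_{i}'(r))' = r^{n-1}\bar{v}_{i+1}(r) \geq 0, \qquad \bar{v}_{i}'(0)=0,
$$
forces $\bar{v}_{i}'(r)\le 0$, so each $\bar{v}_{i}(r)\searrow L_{i}\ge 0$ as $r\to+\infty$. Propagating the ODE backwards shows that $L_{i}>0$ for some $i\ge 1$ would create $r^{2}$-growth of $\bar{v}_{i-1}$, incompatible with its boundedness from above; hence $L_{1}=\cdots=L_{n/2-1}=0$. Jensen's inequality provides the lower bound $\bar{v}_{n/2}(r)\geq \bar{u}(r)^{p}/r^{a}$, which, combined with integrated versions of the radial ODEs, yields an estimate of the form $\int_{0}^{\infty} r^{n-1-a}\bar{u}(r)^{p}\,dr < +\infty$. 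Since $a < n$, a strictly positive limit $L_{0} = \lim_{r\to\infty}\bar{u}(r)>0$ would make this integral diverge, so $L_{0}=0$. A final bootstrap on the decay rate of $\bar{u}$, propagating back through the chain of radial ODEs with the improved integrability, forces $\bar{u}\equiv 0$ and therefore $u\equiv 0$.

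The main obstacle is clearly the first step. In the subcritical order case $\alpha < n$, the fundamental solution of $(-\Delta)^{\alpha/2}$ is a positive Riesz kernel, so one recovers $u$ as a convolution of $v_{n/2}$ with this kernel and reads off super poly-harmonicity from a sign consideration inside the integral. In the critical order case the fundamental solution $c_{n}\ln\frac{1}{|x-y|}$ changes sign and is unbounded at infinity, so no such representation is available; the property must be extracted purely from ODE analysis of spherical averages on bounded radial intervals, with a careful control of the behavior at infinity. The dichotomy in the hypotheses is exactly what closes this control: the growth bound $u(x)=o(|x|^{2})$ eliminates the harmonic-polynomial ambiguity generically produced when integrating $-\Delta$ twice, while the arithmetic condition $0\le a\le 2+2p$ supplies, via the equation itself, enough decay of the nonlinearity to reach the same conclusion without an explicit growth bound on $u$ a priori.
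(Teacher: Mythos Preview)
Your two-step outline is the right skeleton, but both steps, as written, miss the mechanisms that actually close the argument in the critical order setting.

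\textbf{Step 1.} Your plan is to take spherical averages about a single center and show that a sign violation forces $\bar{u}(r)$ to grow faster than $r^{2}$. This does dispose of the case when $u(x)=o(|x|^{2})$ is assumed. But the paper's proof of the key inequality $(-\Delta)^{n/2-1}u\ge 0$ does \emph{not} use either extra hypothesis: when $n/2$ is even the single-center averaging only yields $\bar{u}(r)\gtrsim r^{n-2}$, and one still has to turn this into a contradiction for all $0\le a<n$. The paper does this by a ``re-centers and iteration'' argument: after $n/2$ successive re-centerings the averaged quantities acquire definite signs at the new origin, and then the scaling $u_{\lambda}(x)=\lambda^{(n-a)/(p-1)}u(\lambda x)$ is used to make the initial data arbitrarily large, so that a nonlinear recursion $l_{k+1}=C_{0}l_{k}^{p}/(2\alpha_{k}p)^{n}$ blows up at $r=1$. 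Your phrase ``a polynomial upper bound on $\bar{u}$ produced by iterating the equation'' does not capture this; without the scaling the iterated constants can collapse to zero. The hypotheses $0\le a\le 2+2p$ or $u=o(|x|^{2})$ enter only in the paper's \emph{second} sub-step, where one assumes $(-\Delta)^{n/2-1}u\ge 0$ and rules out a sign change at a lower level.

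\textbf{Step 2.} This is where your proposal has a genuine gap. Everything you write up to and including $L_{0}=\lim_{r\to\infty}\bar{u}(r)=0$ and $\int_{0}^{\infty}r^{n-1-a}\bar{u}(r)^{p}\,dr<\infty$ is equally valid for the \emph{sub}-critical order equation $(-\Delta)^{m}u=u^{p}$ with $2m<n$ and $p=\frac{n+2m}{n-2m}$, where the standard bubbles are nontrivial positive solutions satisfying super poly-harmonicity, monotone spherical averages, all $L_{i}=0$, and the analogous integrability. Hence no ``bootstrap on the decay rate of $\bar{u}$'' propagated through the radial ODEs can by itself force $\bar{u}\equiv 0$; something specific to the critical order must be invoked. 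The paper supplies exactly this: for $0\le a<2$ it upgrades super poly-harmonicity to the iterated Riesz representation and uses that the $(n/2)$-fold composition of the kernel $|x-y|^{2-n}$ would be the (nonexistent) Riesz kernel of order $n$, producing a logarithmic divergence
\[
u(0)\ \ge\ c\int_{|y|\ge R_{0}}\frac{dy}{|y|^{n}\ln|y|}\ =\ +\infty,
\]
which is the contradiction. For $a\ge 2$ the paper instead exploits the assumed $C^{n-2}$ regularity of $u$ at the origin: averaging about $0$ gives $-\overline{u_{n/2-1}}\,'(r)\ge \frac{c^{p}}{n-a}r^{1-a}$, and integrating near $r=0$ forces $(-\Delta)^{n/2-1}u(0)=+\infty$. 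Neither of these two mechanisms appears in your proposal, and your radial-ODE bootstrap cannot substitute for them.
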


\begin{rem}\label{remark0}
In Theorem \ref{Thm0}, the smoothness assumption on $u$ at $x=0$ is necessary. Equation \eqref{PDE} admits a distributional solution of the form $u(x)=C|x|^{-\sigma}$ with $\sigma=\frac{n-a}{p-1}>0$.
\end{rem}

It's well known that the super poly-harmonic properties of solutions are crucial in establishing Liouville type theorems and the representation formulae for higher order or fractional order PDEs (see e.g. \cite{CF,CL2,WX}). In Section 2, we will first prove the super poly-harmonic properties of solutions by using ``re-centers and iteration" arguments (see Lemma \ref{lemma0}). Nevertheless, being different from the non-critical order equations, the integral representation in terms of the fundamental solution of $(-\Delta)^{\frac{n}{2}}$ can't be deduced directly from the super poly-harmonic properties, since the fundamental solution $c_{n}\ln\frac{1}{|x-y|}$ changes its signs in $\mathbb{R}^{n}$. Fortunately, based on Lemma \ref{lemma0}, we can derive instead the following integral inequality (see \eqref{formula})
\begin{equation*}
  +\infty>u(0)\geq\int_{\mathbb{R}^{n}}\frac{R_{2,n}}{|y^{\frac{n}{2}}|^{n-2}}\int_{\mathbb{R}^{n}}\frac{R_{2,n}}{|y^{\frac{n}{2}}-y^{\frac{n}{2}-1}|^{n-2}}\cdots
  \int_{\mathbb{R}^{n}}\frac{R_{2,n}}{|y^{2}-y^{1}|^{n-2}}\frac{u^{p}(y^{1})}{|y^{1}|^{a}}dy^{1}\cdots dy^{\frac{n}{2}}
\end{equation*}
for $0\leq a<2$, where the Riesz potential's constants $R_{2,n}:=\frac{\Gamma\big(\frac{n-2}{2}\big)}{4\pi^{\frac{n}{2}}}$. This integral inequality will lead to a contradiction on integrability unless $u\equiv0$. As to the cases $a\geq2$, we can also obtain a contradiction using the integral estimates arguments if $u$ is not identically zero. As a consequence, Theorem \ref{Thm0} is proved.

In the following, we will use $C$ to denote a general positive constant that may depend on $n$, $a$, $p$ and $u$, and whose value may differ from line to line.

\section{Proof of Theorem \ref{Thm0}}

In this section, we will prove Theorem \ref{Thm0} by using contradiction arguments. Now suppose on the contrary that $u\geq0$ satisfies equation \eqref{PDE} but $u$ is not identically zero, then there exists some $\bar{x}\in\mathbb{R}^{n}$ such that $u(\bar{x})>0$.

The super poly-harmonic properties of solutions are closely related to the representation formulae and Liouville type theorems (see \cite{CF,CL2,WX} and the references therein). Therefore, in order to prove Theorem \ref{Thm0}, we need the following lemma about the super poly-harmonicity.

\begin{lem}\label{lemma0}(Super poly-harmonic properties). Assume $n\geq4$ is even, $0\leq a<n$, $1<p<+\infty$ and $u$ is a nonnegative solution of \eqref{PDE}. If one of the following two assumptions
\begin{equation*}
  0\leq a\leq2+2p \,\,\,\,\,\,\,\,\,\,\,\, \text{or} \,\,\,\,\,\,\,\,\,\,\,\, u(x)=o(|x|^{2}) \,\,\,\, \text{as} \,\, |x|\rightarrow+\infty
\end{equation*}
holds, then
\begin{equation*}
  (-\Delta)^{i}u(x)\geq0
\end{equation*}
for every $i=1,2,\cdots,\frac{n}{2}-1$ and all $x\in\mathbb{R}^{n}$.
\end{lem}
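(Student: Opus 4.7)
My plan is a proof by contradiction combined with a ``re-centering and iteration'' scheme run by downward induction on $i$. Set $u_i := (-\Delta)^i u$, so that $u_0 = u \geq 0$ and $u_{n/2} = u^p/|x|^a \geq 0$. The induction hypothesis at level $k$ is that $u_{k+1}, \ldots, u_{n/2-1}$ are already known to be non-negative (vacuous when $k = n/2-1$); I want to conclude $u_k \geq 0$. Suppose toward contradiction $u_k(x_0) < 0$ at some $x_0$. Re-center at $x_0$ and denote by $\bar u_i(r)$ the spherical average of $u_i$ on $\partial B_r(x_0)$. These averages satisfy the radial Poisson identities
\begin{equation*}
-\frac{1}{r^{n-1}}\bigl(r^{n-1}\bar u_i'(r)\bigr)' = \bar u_{i+1}(r), \qquad \bar u_i(0) = u_i(x_0).
\end{equation*}

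Because $u_{k+1} \geq 0$, $u_k$ is super-harmonic, so the mean value inequality gives $\bar u_k(r) \leq u_k(x_0) =: -c_0 < 0$ for all $r \geq 0$. Iterating the Poisson identity downward, an induction on $j = 1, \ldots, k$ that absorbs the boundary constants $\bar u_i(0) = u_i(x_0)$ into the leading polynomial yields
\begin{equation*}
(-1)^{j+1}\bar u_{k-j}(r) \geq C_j\, r^{2j}, \qquad C_j > 0,
\end{equation*}
for all $r$ sufficiently large. The choice $j = k$ controls the growth of $\bar u = \bar u_0$: if $k$ is even, $\bar u(r) \leq -C r^{2k} \to -\infty$, contradicting $u \geq 0$; if $k$ is odd, $\bar u(r) \geq C r^{2k} \geq C r^2$, immediately contradicting the assumption $u(x) = o(|x|^2)$ as $|x| \to \infty$.

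Under the alternative assumption $0 \leq a \leq 2+2p$ with $k$ odd, I would close the loop by a bootstrap through the equation. Since $\bar u(r) \geq C r^{2k}$ and $|y| \sim r$ on $\partial B_r(x_0)$ when $r \gg |x_0|$, Jensen's inequality gives
\begin{equation*}
\bar u_{n/2}(r) = \overline{u^p/|\cdot|^a}(r) \gtrsim r^{2kp - a}.
\end{equation*}
Re-running the downward integration cascade starting from this new lower bound on $\bar u_{n/2}$ boosts $\bar u(r) \gtrsim r^{\beta_1}$ with $\beta_1 = 2kp - a + n$. The threshold $a \leq 2+2p$ is exactly what makes the very first step of this cascade (producing $\bar u_{n/2-1}(r) \lesssim -r^{2kp - a + 2}$, the worst case being $k=1$) yield a non-negative exponent, so the subsequent $n-2$ integrations convert it into a strict improvement $\beta_1 > \beta_0 = 2k$. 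Iterating this bootstrap yields an unbounded sequence $\beta_m \to \infty$ with positive coefficients $C_m$, forcing $\bar u(r) = +\infty$ at any fixed large $r$ and hence a contradiction with the pointwise finiteness of $u$.

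I expect the main obstacle to be the bookkeeping in both cascades: each integration step generates lower-order boundary contributions $u_i(x_0)$ of a priori indeterminate sign, which must be dominated by the emerging polynomial growth; simultaneously one has to track the alternating signs produced by successive inversions of $-\Delta$ so that the correct polarity is reached at $\bar u_0$. The precise role of the numerical threshold $a \leq 2+2p$ is to prevent the bootstrap from stalling in the tightest situation $k = 1$, where the exponent improvement per round is smallest.
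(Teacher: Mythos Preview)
Your scheme is essentially correct for the inductive step $k<\tfrac{n}{2}-1$ and for either parity of $k$ under the growth hypothesis $u=o(|x|^{2})$. Once $u_{n/2-1}\geq 0$ is already in hand, feeding $\bar u(r)\gtrsim r^{2k}$ once into the top equation yields $\bar u_{n/2-1}(r)\lesssim -r^{2kp-a+2}\to-\infty$ (this is exactly where $a\leq 2+2p$ is used, the tight case being $k=1$), which contradicts the inductive hypothesis immediately. No further bootstrap is needed there, so your ``iterating yields $\beta_{m}\to\infty$'' is redundant for these $k$; this is the paper's Step~2, done with a single center instead of the paper's re-centering.

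The genuine gap is the base case $k=\tfrac{n}{2}-1$ with $\tfrac{n}{2}$ even, under the hypothesis $a\leq 2+2p$. Here there is no previously established sign of $u_{n/2-1}$ to contradict, so you do need the full bootstrap. But your single-center estimates $(-1)^{j+1}\bar u_{k-j}(r)\geq C_{j}r^{2j}$ hold only for $r\geq R_{j}$, because the boundary values $u_{i}(x_{0})$ have uncontrolled sign and must be absorbed by the leading polynomial. Each round of the bootstrap integrates from such a lower limit, and the threshold grows geometrically, say $R_{m+1}\sim CR_{m}$. Your conclusion $\bar u(r)\geq D_{m}r^{\beta_{m}}$ is therefore valid only for $r\geq R_{m}\to\infty$, and $\bar u(R_{m})\to\infty$ is no contradiction: you have already shown $\bar u(r)\gtrsim r^{n-2}$, so $\bar u$ is unbounded. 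The claim ``forcing $\bar u(r)=+\infty$ at any fixed large $r$'' needs $\sup_{m}R_{m}<\infty$, and you have no mechanism to ensure this.

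The paper's Step~1 handles this base case by two devices you are missing. First, it re-centers $\tfrac{n}{2}$ times so that after the successive averagings the resulting function $\widetilde u$ satisfies $\widetilde u(r)\geq\widetilde u(0)>0$ for \emph{all} $r\geq 0$, not merely asymptotically; this eliminates the drifting thresholds. Second, it rescales via $u_{\lambda}(x)=\lambda^{(n-a)/(p-1)}u(\lambda x)$, which leaves the equation invariant, shrinks the re-centering distances, and makes $\widetilde u(0)$ as large as one wishes; this is what makes the coefficient recursion $l_{m+1}=C_{0}l_{m}^{p}/(2\alpha_{m}p)^{n}$ blow up at $r=1$. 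Note that this argument uses neither hypothesis on $a$ nor on the growth of $u$---it works for all $0\leq a<n$---and the extra assumptions enter only in Step~2. Your identification of $a\leq 2+2p$ as the condition preventing the bootstrap from stalling at $k=\tfrac{n}{2}-1$ is therefore off: in fact $\beta_{1}>\beta_{0}$ holds for every $a<n$ there, and the obstruction is the coefficient control, not the exponent.
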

\begin{proof}
Let $u_{i}:=(- \Delta)^{i}u$. We want to show that $u_{i}\geq0$ for $i=1,2,\cdots,\frac{n}{2}-1$. Our proof will be divided into two steps.

\textbf{\emph{Step 1.}} We first show that
\begin{equation}\label{2-1}
u_{\frac{n}{2}-1}=(-\Delta)^{\frac{n}{2}- 1}u\geq0.
\end{equation}
If not, then there exists $0\neq x^{1}\in\mathbb{R}^n$, such that
\begin{equation}\label{2-2}
  u_{\frac{n}{2}-1}(x^{1})<0.
\end{equation}

Now, let
\begin{equation}\label{2-3}
  \bar{f}(r)=\bar{f}\big(|x-x^1|\big):=\frac{1}{|\partial B_{r}(x^{1})|}\int_{\partial B_{r}(x^{1})}f(x)d\sigma
\end{equation}
be the spherical average of $f$ with respect to the center $x^1$. Then, by the well-known property $\overline{\Delta u}=\Delta\bar{u}$ and $0\leq a<n$, we have, for any $r\geq0$ and $r\neq|x^{1}|$,
\begin{equation}\label{2-4}
\left\{{\begin{array}{l} {-\Delta\overline{u_{\frac{n}{2}-1}}(r)=\overline{\frac{u^{p}(x)}{|x|^{a}}}(r)}, \\  {} \\ {-\Delta\overline{u_{\frac{n}{2}-2}}(r)=\overline{u_{\frac{n}{2}-1}}(r)}, \\ \cdots\cdots \\ {-\Delta\overline u(r)=\overline{u_1}(r)}. \\ \end{array}}\right.
\end{equation}
From the first equation in \eqref{2-4}, by Jensen's inequality, we get, for any $r\geq0$ and $r\neq|x^{1}|$,
\begin{align}\label{2-5}
-\Delta\overline{u_{\frac{n}{2}-1}}(r)&=\frac{1}{{| {\partial
B_{r}({x^{1}})}| }}\int_{\partial B_{r}(
{x^{1}})}\frac{{u^{p}(x)}}{|x|
^{a}}d\sigma\nonumber\\
& \geq({r+| {x^{1}}| })^{-a}\frac
{1}{{| {\partial B_{r}({x^{1}})}| }}
\int_{\partial B_{r}({x^{1}})}{u^{p}(  x)}d\sigma\\
&  \geq({r+| {x^{1}}| })^{-a}\left(
{\frac{1}{{| {\partial B_{r}({x^{1}})}| }
}\int_{\partial B_{r}({x^{1}})}{u(x)
}d\sigma}\right)^{p}\nonumber\\
& =({r+| {x^{1}}| })^{-a}\bar{u}^{p}(r)\geq0.\nonumber
\end{align}
From \eqref{2-5}, one has
\begin{equation}\label{2-6}
  -\frac{1}{r^{n-1}}\Big(r^{n-1}\overline{u_{\frac{n}{2}-1}}\,'(r)\Big)'\geq({r+| {x^{1}}| })^{-a}\bar{u}^{p}(r)\geq0.
\end{equation}
Since $0\leq a<n$, we can integrate both sides of \eqref{2-6} from $0$ to $r$ and derive
\begin{equation}\label{2-7}
\overline{u_{\frac{n}{2}-1}}\,'(r)\leq0, \,\,\,\,\,\, \overline{u_{\frac{n}{2}-1}}(r)\leq\overline{u_{\frac{n}{2}-1}}(0)=u_{\frac{n}{2}-1}(x^{1})=:-c_{0}<0
\end{equation}
for any $r\geq0$. From the second equation in \eqref{2-4}, we deduce that
\begin{equation}\label{2-8}
-\frac{1}{{r^{n-1}}}\Big({r^{n-1}\overline{u_{\frac{n}{2}-2}}\,'}(r)\Big)'=\overline{u_{\frac{n}{2}-1}}(r)\leq-c_{0}, \,\,\,\,\,\, \forall \,\, r\geq0,
\end{equation}
integrating from $0$ to $r$ yields
\begin{equation}\label{2-9}
\overline{u_{\frac{n}{2} - 2}}\,'(r)\geq\frac{c_{0}}{n}r, \,\,\,\,\,\, \overline{u_{\frac{n}{2}-2}}(r)\geq\overline{u_{\frac{n}{2}-2}}(0)+\frac{c_{0}}{2n}r^{2}, \,\,\,\,\,\, \forall \,\, r\geq0.
\end{equation}
Hence, there exists $r_{1} > 0$ such that
\begin{equation}\label{2-10}
  \overline{u_{\frac{n}{2}-2}}(r_{1})>0.
\end{equation}
Next, take a point $x^{2}$ with $|x^{2}-x^{1}|=r_{1}$ as the new center, and make average of $\bar{f}$ at the new center $x^{2}$, i.e.,
\begin{equation}\label{2-11}
\overline{\overline{f}}(r)=\overline{\overline{f}}\big(|x-x^{2}|\big):=\frac{1}{|\partial B_{r}(x^{2})|}\int_{\partial B_{r}(x^{2})}\bar f(x)d\sigma.
\end{equation}
One can easily verify that
\begin{equation}\label{2-12}
\overline{\overline{u_{\frac{n}{2}-2}}}(0)=\overline{u _{\frac{n}{2} - 2}}(x^{2})=:c_{1}>0.
\end{equation}
Then, from \eqref{2-5} and Jensen's inequality, we deduce that $(\overline{\overline{u}},\overline{\overline{u_{1}}},\cdots,\overline{\overline{u_{\frac{n}{2}-1}}})$ satisfies
\begin{equation}\label{2-13}
\left\{{\begin{array}{l} {-\Delta\overline{\overline{u_{\frac{n}{2}-1}}}(r)\geq\big(r+|x^{2}-x^{1}|+|x^{1}|\big)^{-a}\overline{\overline{u}}^{p}(r)\geq0}, \\
{} \\
{-\Delta\overline{\overline{u_{\frac{n}{2}-2}}}(r)=\overline{\overline{u_{\frac{n}{2}-1}}}(r)}, \\ \cdots\cdots \\ {-\Delta\overline{\overline{u}}(r)=\overline{\overline{u _1}}(r)} \\ \end{array}}\right.
\end{equation}
for any $r\geq0$. Using the same method as obtaining the estimate \eqref{2-9}, we conclude that
\begin{equation}\label{2-14}
  \overline{\overline{u_{\frac{n}{2}-2}}}(r)\geq\overline{\overline{u_{\frac{n}{2}-2}}}(0)+\frac{{c_{0}}}{{2n}}r^{2}, \,\,\,\,\,\, \forall \,\, r\geq0.
\end{equation}
Thus we infer from \eqref{2-7}, \eqref{2-12}, \eqref{2-13} and \eqref{2-14} that
\begin{equation}\label{2-15}
\overline{\overline{u_{\frac{n}{2}-1}}}(r)\leq\overline{\overline{u_{\frac{n}{2}-1}}}(0)<0, \,\,\,\,\,\,\,\,\, \overline{\overline{u_{\frac{n}{2}-2}}}(r)\geq\overline{\overline{u_{\frac{n}{2}-2}}}(0)>0, \,\,\,\,\,\, \forall \,\, r\geq0.
\end{equation}
From the third equation in \eqref{2-12} and integrating, we infer that
\begin{equation}\label{2-16}
  \overline{\overline{u_{\frac{n}{2}-3}}}\,'(r)\leq-\frac{c_{1}}{n}r \,\,\,\,\,\, \text{and} \,\,\,\,\,\, \overline{\overline{u_{\frac{n}{2}-3}}}(r)\leq\overline{\overline{u_{\frac{n}{2}-3}}}(0)-\frac{{c_{1}}}{{2n}}r^{2}, \,\,\,\,\,\, \forall \,\, r\geq0.
\end{equation}
Hence, there exists $r_{2}>0$ such that
\begin{equation}\label{2-17}
  \overline{\overline{u_{\frac{n}{2}-3}}}(r_{2})<0.
\end{equation}
Next, we take a point $x^{3}$ with $|x^{3}-x^{2}|=r_{2}$ as the new center and make average of $\bar{\bar{f}}$ at the new center $x^{3}$, i.e.,
\begin{equation}\label{2-18}
\overline{\overline{\overline{f}}}(r)=\overline{\overline{\overline{f}}}\big(|x-x^{3}|\big):=\frac{1}{|\partial B_{r}(x^{3})|}\int_{\partial B_{r}(x^{3})}\overline{\overline{f}}(x)d\sigma.
\end{equation}
It follows that
\begin{equation}\label{2-19}
\overline{\overline{\overline{u_{\frac{n}{2}-3}}}}(0)=\overline{\overline{u _{\frac{n}{2}-3}}}(x^{3})=:-c_{2}<0.
\end{equation}
One can easily verify that $\overline{\overline{\overline{u}}}$ and $\overline{\overline{\overline{u_{i}}}}$ ($i=1,\cdots,\frac{n}{2}-1$) satisfy entirely similar equations as $(\overline{\overline{u}},\overline{\overline{u_{1}}},\cdots,\overline{\overline{u_{\frac{n}{2}-1}}})$ (see \eqref{2-13}). Using the same method as deriving \eqref{2-15}, we arrive at
\begin{equation}\label{2-20}
\overline{\overline{\overline{u_{\frac{n}{2}-1}}}}(r)\leq\overline{\overline{\overline{u_{\frac{n}{2}-1}}}}(0)<0, \,\,\,\,\,\,\, \overline{\overline{\overline{u_{\frac{n}{2}-2}}}}(r)\geq\overline{\overline{\overline{u_{\frac{n}{2}-2}}}}(0)>0, \,\,\,\,\,\,\,
\overline{\overline{\overline{u_{\frac{n}{2}-3}}}}(r)\leq\overline{\overline{\overline{u_{\frac{n}{2}-3}}}}(0)<0
\end{equation}
for any $r\geq0$. Continuing this way, after $\frac{n}{2}$ steps of re-centers (denotes the centers by $x^{1},x^{2},\cdots,x^{\frac{n}{2}}$ and the resulting function coming from taking $\frac{n}{2}$ times averages by $\widetilde{u}$ and $\widetilde{u_{i}}$ for $i=1,2,\cdots,\frac{n}{2}-1$), we finally obtain that
\begin{equation}\label{2-21}
-\Delta\widetilde{u_{\frac{n}{2}-1}}(r)\geq\Big(r+|x^{\frac{n}{2}}-x^{\frac{n}{2}-1}|+\cdots+|x^{2}-x^{1}|+|x^{1}|\Big)^{-a}\widetilde{u}^{p}(r)\geq0,
\end{equation}
and for every $i=1,\cdots,\frac{n}{2}-1$,
\begin{equation}\label{2-22}
(-1)^{i}\widetilde{u_{\frac{n}{2}-i}}(r)\geq(-1)^{i}\widetilde{u_{\frac{n}{2}-i}}(0)>0, \,\,\, \,\,\, (-1)^{\frac{n}{2}}\widetilde{u}(r)\geq(-1)^{\frac{n}{2}}\widetilde{u}(0)>0, \,\,\,\,\,\, \forall \,\, r\geq0.
\end{equation}

Now, if $\frac{n}{2}$ is odd, estimate \eqref{2-22} implies immediately that
\begin{equation}\label{2-23}
  \widetilde{u}(r)\leq\widetilde{u}(0)<0,
\end{equation}
which contradicts the fact that $u\geq0$. Therefore, we only need to deal with the cases that $\frac{n}{2}$ is an even integer hereafter.

For arbitrary $\lambda>0$, define the re-scaling of $u$ by
\begin{equation}\label{2-24}
  u_{\lambda}(x):=\lambda^{\frac{n-a}{p-1}}u(\lambda x).
\end{equation}
Then one can easily verify that equation \eqref{PDE} is invariant under this re-scaling. After $\frac{n}{2}$ steps of re-centers for $u_{\lambda}$, we denote the centers for $u_{\lambda}$ by $x_{\lambda}^{1},x_{\lambda}^{2},\cdots,x_{\lambda}^{\frac{n}{2}}$ and the resulting function coming from taking $\frac{n}{2}$ times averages by $\widetilde{u_{\lambda}}$ and $\widetilde{u_{\lambda,i}}$ for $i=1,2,\cdots,\frac{n}{2}-1$. Then \eqref{2-21} and \eqref{2-22} still hold for $(\widetilde{u_{\lambda}},\widetilde{u_{\lambda,1}},\cdots,\widetilde{u_{\lambda,\frac{n}{2}-1}})$ and $x_{\lambda}^{k}=\frac{1}{\lambda}x_{k}$ for $k=1,\cdots,\frac{n}{2}$, thus one has the following estimate
\begin{equation}\label{2-25}
  |x_{\lambda}^{\frac{n}{2}}-x_{\lambda}^{\frac{n}{2}-1}|+\cdots+|x_{\lambda}^{2}-x_{\lambda}^{1}|+|x_{\lambda}^{1}|\leq
|x^{\frac{n}{2}}-x^{\frac{n}{2}-1}|+\cdots+|x^{2}-x^{1}|+|x^{1}|=:M
\end{equation}
holds uniformly for every $\lambda\geq1$.

Since we have \eqref{2-22} and $\frac{n}{2}$ is even, it follows that
\begin{equation}\label{2-26}
  \widetilde{u}(r)\geq\widetilde{u}(0)>0, \,\,\,\,\,\, \forall \,\, r\geq0,
\end{equation}
and hence
\begin{equation}\label{2-27}
  \widetilde{u_{\lambda}}(r)\geq\widetilde{u_{\lambda}}(0)=\lambda^{\frac{n-a}{p-1}}\widetilde{u}(0)>0, \,\,\,\,\,\, \forall \,\, r\geq0.
\end{equation}
It is clear that one can choose $\lambda$ sufficiently large, such that $\widetilde{u_{\lambda}}(0)$ be as large as we wish. Therefore, by the estimates \eqref{2-25} and \eqref{2-27}, we may assume that, for any given $l_{0}>0$ (to be determined later), we already have
\begin{equation}\label{2-28}
\widetilde{u}(r)\geq l_{0}\geq l_{0}r^{\alpha_{0}}, \,\,\,\,\,\,\,\,\, \forall \,\, 0\leq r\leq1,
\end{equation}
where
\begin{equation}\label{2-29}
  \alpha_{0}:=\max\Big\{1,\frac{2n}{p}\Big\}\geq1,
\end{equation}
or else we may replace $u$ by $u_{\lambda}$ with $\lambda$ large enough. As a consequence, we infer from \eqref{2-21}, \eqref{2-25} and \eqref{2-28} that such solution $u$ satisfies
\begin{align}\label{2-30}
-\Delta\widetilde{u_{\frac{n}{2}-1}}(r)&\geq\Big(r+|x^{\frac{n}{2}}-x^{\frac{n}{2}-1}|+\cdots+|x^{2}-x^{1}|+|x^{1}|\Big)^{-a}\widetilde{u}^{p}(r) \nonumber \\
&\geq\big(1+M\big)^{-a}\,l_{0}^{p}\,r^{\alpha_{0}p}\\
&=:C_{0}\,l_{0}^{p}\,r^{\alpha_{0}p}, \,\,\,\,\,\,\,\,\,\,\,\,  \forall \,\, 0\leq r\leq1, \nonumber
\end{align}
where $C_{0}:=(1+M)^{-a}\in(0,1)$. Integrating both sides of \eqref{2-30} from $0$ to $r$ twice and taking into account of \eqref{2-22} yield
\begin{equation}\label{2-31}
  \widetilde{u_{\frac{n}{2}-1}}(r)<-\frac{C_{0}l_{0}^{p}}{(\alpha_{0}p+n)(\alpha_{0}p+2)}r^{\alpha_{0}p+2}, \,\,\,\,\,\, \forall \,\, 0\leq r\leq1.
\end{equation}
This implies
\begin{equation}\label{2-32}
  -\frac{1}{r^{n-1}}\left(r^{n-1}\widetilde{u_{\frac{n}{2}-2}}\,'(r)\right)'<-\frac{C_{0}l_{0}^{p}}{(\alpha_{0}p+n)(\alpha_{0}p+2)}r^{\alpha_{0}p+2},
\end{equation}
and consequently,
\begin{equation}\label{2-33}
  \widetilde{u_{\frac{n}{2}-2}}(r)>\frac{C_{0}l_{0}^{p}}{(\alpha_{0}p+n)(\alpha_{0}p+2)(\alpha_{0}p+n+2)(\alpha_{0}p+4)}r^{\alpha_{0}p+4}, \,\,\,\,\,\, \forall \,\, 0\leq r\leq1.
\end{equation}
Continuing this way, since $\frac{n}{2}$ is an even integer, by iteration, we can finally arrive at
\begin{equation}\label{2-34}
  \widetilde{u}(r)>\frac{C_{0}l_{0}^{p}}{(\alpha_{0}p+2n)^{n}}r^{\alpha_{0}p+n}, \,\,\,\,\,\, \forall \,\, 0\leq r\leq1.
\end{equation}
Now, define
\begin{equation}\label{2-35}
  \alpha_{k+1}:=2\alpha_{k}p\geq\alpha_{k}p+2n \,\,\,\,\,\, \text{and} \,\,\,\,\,\, l_{k+1}:=\frac{C_{0}l_{k}^{p}}{(2\alpha_{k}p)^{n}}
\end{equation}
for $k=0,1,\cdots$. Then \eqref{2-34} implies
\begin{equation}\label{2-36}
\widetilde{u}(r)>\frac{C_{0}l_{0}^{p}}{(2\alpha_{0}p)^{n}}r^{2\alpha_{0}p}=l_{1}r^{\alpha_{1}}, \,\,\,\,\, \forall \,\, r\in[0,1].
\end{equation}
Suppose we have $\widetilde{u}(r)\geq l_{k}r^{\alpha_{k}}$, then go through the entire process as above, we can derive $\widetilde{u}(r)\geq l_{k+1}r^{\alpha_{k+1}}$ for any $0\leq r\leq1$. Therefore, one can prove by induction that
\begin{equation}\label{2-37}
  \widetilde{u}(r)\geq l_{k}r^{\alpha_{k}}, \,\,\,\,\,\, \forall \,\, r\in[0,1], \,\,\,\,\,\, \forall \,\, k\in\mathbb{N}.
\end{equation}
Through direct calculations, we have
\begin{eqnarray}\label{2-38}
  l_{k}&=&\frac{C_{0}^{\frac{p^{k}-1}{p-1}}l_{0}^{p^{k}}}{(2p)^{n(k+(k-1)p+(k-2)p^{2}+\cdots+p^{k-1})}\alpha_{0}^{\frac{n(p^{k}-1)}{p-1}}} \\
 \nonumber &=& \frac{C_{0}^{\frac{p^{k}-1}{p-1}}l_{0}^{p^{k}}(2p)^{\frac{nk}{p-1}}}{(2p)^{\frac{n(p^{k+1}-p)}{(p-1)^{2}}}\alpha_{0}^{\frac{n(p^{k}-1)}{p-1}}}
\geq (2p)^{\frac{nk}{p-1}}\left(\frac{C_{0}^{\frac{1}{p-1}}l_{0}}{(2p)^{\frac{np}{(p-1)^{2}}}\alpha_{0}^{\frac{n}{p-1}}}\right)^{p^{k}}
\end{eqnarray}
for $k=0,1,2,\cdots$. Now we take
\begin{equation}\label{2-39}
  l_{0}:=2C_{0}^{\frac{-1}{p-1}}(2p)^{\frac{np}{(p-1)^{2}}}\alpha_{0}^{\frac{n}{p-1}},
\end{equation}
then from \eqref{2-37}, \eqref{2-38} and \eqref{2-39}, we deduce that
\begin{equation}\label{2-40}
  \widetilde{u}(1)\geq(2p)^{\frac{nk}{p-1}}2^{p^{k}}\rightarrow+\infty, \,\,\,\,\,\, \text{as} \,\, k\rightarrow\infty.
\end{equation}
This is absurd. Therefore, \eqref{2-1} must hold, that is, $u_{\frac{n}{2}-1}=(-\Delta)^{\frac{n}{2}- 1}u\geq0$.

\textbf{\emph{Step 2.}} Next, we will show that all the other $u_{i}$ ($i=1,\cdots,\frac{n}{2}-2$) must be nonnegative, that is,
\begin{equation}\label{2-41}
u_{\frac{n}{2}-i}(x)\geq0, \,\,\,\,\,\,\,\,\,\,\, \forall \,\, i=2,3,\cdots,\frac{n}{2}-1, \,\,\,\,\,\, \forall \,\, x\in\mathbb{R}^{n}.
\end{equation}
Suppose on the contrary that, there exists some $2\leq i\leq\frac{n}{2}-1$ and some $x^{0}\in\mathbb{R}^{n}$ such that
\begin{equation}\label{2-42}
  u_{\frac{n}{2}-1}(x)\geq0, \,\,\,\,\, u_{\frac{n}{2}-2}(x)\geq0, \,\,\,\, \cdots, \,\,\,\, u_{\frac{n}{2}-i+1}(x)\geq0, \,\,\,\,\,\, \forall \,\, x\in\mathbb{R}^{n},
\end{equation}
\begin{equation}\label{2-43}
  u_{\frac{n}{2}-i}(x^{0})<0.
\end{equation}
Then, repeating the similar ``re-centers and iteration" arguments as in Step 1, after $\frac{n}{2}-i+1$ steps of re-centers (denotes the centers by $\bar{x}^{1},\bar{x}^{2},\cdots,\bar{x}^{\frac{n}{2}-i+1}$), the signs of the resulting functions $\widetilde{u_{\frac{n}{2}-j}}$ ($j=i,\cdots,\frac{n}{2}-1$) and $\widetilde{u}$ satisfy
\begin{equation}\label{2-44}
  (-1)^{j-i+1}\widetilde{u_{\frac{n}{2}-j}}(r)\geq(-1)^{j-i+1}\widetilde{u_{\frac{n}{2}-j}}(0)>0, \,\,\,\,\,\,
(-1)^{\frac{n}{2}-i+1}\widetilde{u}(r)\geq(-1)^{\frac{n}{2}-i+1}\widetilde{u}(0)>0
\end{equation}
for any $r\geq0$. Since $u\geq0$, it follows immediately from \eqref{2-44} that $\frac{n}{2}-i+1$ is even and
\begin{equation}\label{2-45}
  \widetilde{u}(r)\geq\widetilde{u}(0)>0, \,\,\,\,\,\, \forall \,\, r\geq0.
\end{equation}
Furthermore, since $\frac{n}{2}-i$ is odd, we infer from \eqref{2-44} that
\begin{equation}\label{2-48}
  -\Delta\widetilde{u}(r)=\widetilde{u_{1}}(r)\leq\widetilde{u_{1}}(0)=:-\widetilde{c}<0, \,\,\,\,\,\, \forall \,\, r\geq0,
\end{equation}
and hence, by integrating, one has
\begin{equation}\label{2-49}
  \widetilde{u}(r)\geq\widetilde{u}(0)+\frac{\widetilde{c}}{2n}r^{2}>\frac{\widetilde{c}}{2n}r^{2}, \,\,\,\,\,\, \forall \,\, r\geq0.
\end{equation}
Therefore, if we assume that $u(x)=o(|x|^{2})$ as $|x|\rightarrow+\infty$, we will get a contradiction from \eqref{2-49}.

Or, if we assume that $0\leq a\leq2+2p$, combining \eqref{2-49} with the estimate \eqref{2-21}, we get
\begin{eqnarray}\label{2-46}
  -\Delta\widetilde{u_{\frac{n}{2}-1}}(r)&\geq&\left(r+|\bar{x}^{\frac{n}{2}-i+1}-\bar{x}^{\frac{n}{2}-i}|+\cdots+|\bar{x}^{2}-\bar{x}^{1}|+|\bar{x}^{1}|
\right)^{-a}\widetilde{u}^{p}(r) \\
  \nonumber &\geq&\left(\frac{\widetilde{c}}{2n}\right)^{p}r^{2p-a}
\end{eqnarray}
for $r\geq r_{0}$ sufficiently large. Now, by a direct integration on \eqref{2-46}, we get, if $0\leq a<2+2p$, then
\begin{equation}\label{2-47}
\widetilde{u_{\frac{n}{2}-1}}(r)\leq\widetilde{u_{\frac{n}{2}-1}}(r_{0})-\left(\frac{\widetilde{c}}{2n}\right)^{p}
\frac{r^{2+2p-a}-r_{0}^{2+2p-a}}{(n+2p-a)(2+2p-a)}\rightarrow-\infty, \,\,\,\,\,\, \text{as} \,\,\, r\rightarrow\infty;
\end{equation}
if $a=2+2p$, then
\begin{equation}\label{2-47'}
\widetilde{u_{\frac{n}{2}-1}}(r)\leq\widetilde{u_{\frac{n}{2}-1}}(r_{0})-\left(\frac{\widetilde{c}}{2n}\right)^{p}\frac{\ln r-\ln r_{0}}{n-2}\rightarrow-\infty, \,\,\,\,\,\, \text{as} \,\,\, r\rightarrow\infty.
\end{equation}
This contradicts $u_{\frac{n}{2}-1}\geq0$ and thus \eqref{2-41} must hold. This concludes the proof of Lemma \ref{lemma0}.
\end{proof}

In the following, we will continue carrying out our proof under the same assumptions as Lemma \ref{lemma0}.

By Lemma \ref{lemma0}, we can deduce from $-\Delta u\geq0$, $u\geq0$, $u(\bar{x})>0$ and maximum principle that
\begin{equation}\label{2-50}
  u(x)>0, \,\,\,\,\,\,\, \forall \,\, x\in\mathbb{R}^{n}.
\end{equation}
Then, by maximum principle, Lemma 2.1 from Chen and Lin \cite{CLin} and induction, we can also infer further from $(-\Delta)^{i} u\geq0$ ($i=1,\cdots,\frac{n}{2}-1$), $u>0$ and equation \eqref{PDE} that
\begin{equation}\label{2-51}
  (-\Delta)^{i}u(x)>0, \,\,\,\,\,\,\,\, \forall \,\, i=1,\cdots,\frac{n}{2}-1, \,\,\,\, \forall \,\, x\in\mathbb{R}^{n}.
\end{equation}

Next, we will try to obtain contradictions by discussing two different cases $0\leq a<2$ and $a\geq2$ separately.

\emph{Case i)} $0\leq a<2$. We will also need the following lemma concerning the removable singularity.
\begin{lem}\label{lemma1}
Suppose $u$ is harmonic in $B_{R}(0)\setminus\{0\}$ and satisfies
\begin{equation*}
  u(x)=\left\{
         \begin{array}{ll}
           o(\ln|x|), \,\,\,\,\,\,  n=2,  \\
           \\
           o(|x|^{2-n}), \,\,\,\,\,\, n\geq3,
         \end{array}
       \right. \,\,\,\,\,\,\, \text{as} \,\,\, |x|\rightarrow0.
\end{equation*}
Then $u$ can be defined at $0$ so that it is $C^{2}$ and harmonic in $B_{R}(0)$.
\end{lem}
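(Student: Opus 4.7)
The plan is to use the classical Bôcher-type removable singularity argument, comparing $u$ with the fundamental solution of the Laplacian via the maximum principle. Fix any $0 < r < R$. First, I would solve the Dirichlet problem on $B_r(0)$ with boundary data $u|_{\partial B_r(0)}$ (which is continuous since $u$ is harmonic, hence smooth, near $\partial B_r(0)$) via the Poisson integral formula to obtain a function $v \in C^2(B_r(0)) \cap C(\overline{B_r(0)})$ with $\Delta v = 0$ and $v = u$ on $\partial B_r(0)$. It then suffices to show $u \equiv v$ on $B_r(0) \setminus \{0\}$, for then $u$ extends across $0$ as the $C^2$ harmonic function $v$.

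Set $w := u - v$. Then $w$ is harmonic in $B_r(0) \setminus \{0\}$, continuous on $\overline{B_r(0)} \setminus \{0\}$, vanishes on $\partial B_r(0)$, and inherits the growth condition $w(x) = o(\ln|x|)$ (if $n=2$) or $w(x) = o(|x|^{2-n})$ (if $n \geq 3$) as $|x| \to 0$, since $v$ is bounded near $0$. Let $\Phi$ denote the fundamental solution, namely $\Phi(x) = -\ln|x|$ for $n=2$ and $\Phi(x) = |x|^{2-n}$ for $n \geq 3$, which is positive and harmonic in $\mathbb{R}^n \setminus \{0\}$.

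Fix $\varepsilon > 0$. The growth hypothesis provides $\delta_\varepsilon \in (0, r)$ such that $|w(x)| \leq \varepsilon \Phi(x)$ for all $0 < |x| \leq \delta_\varepsilon$. On $\partial B_r(0)$ we have $w = 0 \leq \varepsilon \Phi$, and on $\partial B_{\delta_\varepsilon}(0)$ we have $|w| \leq \varepsilon \Phi$. Since $\pm w - \varepsilon \Phi$ is harmonic on the annulus $A_\varepsilon := B_r(0) \setminus \overline{B_{\delta_\varepsilon}(0)}$, continuous on its closure, and nonpositive on $\partial A_\varepsilon$, the maximum principle gives $|w(x)| \leq \varepsilon \Phi(x)$ throughout $A_\varepsilon$. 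Fixing any $x \in B_r(0) \setminus \{0\}$ and letting $\varepsilon \to 0^+$ (so $\delta_\varepsilon \to 0^+$ and eventually $x \in A_\varepsilon$) yields $w(x) = 0$. Hence $u = v$ on $B_r(0) \setminus \{0\}$, and defining $u(0) := v(0)$ makes $u$ harmonic and $C^2$ on $B_r(0)$; since $r < R$ was arbitrary, the extension is harmonic on all of $B_R(0)$.

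There is no real obstacle here, as this is a textbook removable singularity result; the only point requiring mild care is choosing the comparison function to match the growth rate at $0$ exactly, so that the $o(\Phi)$ hypothesis is strong enough to drive $\varepsilon \to 0$ in the maximum principle step.
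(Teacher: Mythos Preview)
Your argument is correct and is precisely the approach the paper has in mind: the authors omit the proof, remarking only that it ``can be proved directly by using the Poisson integral formula and maximum principles,'' which is exactly your comparison of $u$ with the Poisson extension $v$ and a fundamental-solution barrier. The only cosmetic point is that for $n=2$ your barrier $\Phi(x)=-\ln|x|$ is nonnegative only for $|x|\leq 1$, so one should take $r<\min(R,1)$ (or use $\ln(r/|x|)$); since the paper only applies the lemma with $n\geq 4$, this is immaterial.
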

Lemma \ref{lemma1} can be proved directly by using the Poisson integral formula and maximum principles, so we omit the details.

Now we will first show that $(-\Delta)^{\frac{n}{2}-1}u$ satisfies the following integral equation
\begin{equation}\label{2c1}
  (-\Delta)^{\frac{n}{2}-1}u(x)=\int_{\mathbb{R}^{n}}\frac{R_{2,n}}{|x-y|^{n-2}}\frac{u^{p}(y)}{|y|^{a}}dy, \,\,\,\,\,\,\,\,\,\, \forall \,\, x\in\mathbb{R}^{n},
\end{equation}
where the Riesz potential's constants $R_{\alpha,n}:=\frac{\Gamma(\frac{n-\alpha}{2})}{\pi^{\frac{n}{2}}2^{\alpha}\Gamma(\frac{\alpha}{2})}$ for $0<\alpha<n$.

To this end, for arbitrary $R>0$, let $f_{1}(u)(x):=\frac{u^{p}(x)}{|x|^{a}}$ and
\begin{equation}\label{2c2}
v_{1}^{R}(x):=\int_{B_R(0)}G_R(x,y)f_{1}(u)(y)dy,
\end{equation}
where the Green's function for $-\Delta$ on $B_R(0)$ is given by
\begin{equation}\label{Green}
  G_R(x,y)=R_{2,n}\bigg[\frac{1}{|x-y|^{n-2}}-\frac{1}{\big(|x|\cdot\big|\frac{Rx}{|x|^{2}}-\frac{y}{R}\big|\big)^{n-2}}\bigg], \,\,\,\, \text{if} \,\, x,y\in B_{R}(0),
\end{equation}
and $G^{2}_{R}(x,y)=0$ if $x$ or $y\in\mathbb{R}^{n}\setminus B_{R}(0)$.

Then, since $0\leq a<2$, we can derive that $v_{1}^{R}\in C^{2}(\mathbb{R}^{n}\setminus\{0\})\cap C(\mathbb{R}^{n})$ and satisfies
\begin{equation}\label{2c3}\\\begin{cases}
-\Delta v_{1}^{R}(x)=\frac{u^{p}(x)}{|x|^{a}},\ \ x\in B_R(0)\setminus\{0\},\\
v_{1}^{R}(x)=0,\ \ \ \ \ \ \ x\in \mathbb{R}^{n}\setminus B_R(0).
\end{cases}\end{equation}
Let $w_{1}^R(x):=(-\Delta)^{\frac{n}{2}-1}u(x)-v_{1}^R(x)\in C^{2}(\mathbb{R}^{n}\setminus\{0\})\cap C(\mathbb{R}^{n})$. By Lemma \ref{lemma0}, Lemma \ref{lemma1}, \eqref{PDE} and \eqref{2c3}, we have $w_{1}^R\in C^{2}(\mathbb{R}^{n})$ and satisfies
\begin{equation}\label{2c4}\\\begin{cases}
-\Delta w_{1}^R(x)=0,\ \ \ \ x\in B_R(0),\\
w_{1}^{R}(x)>0, \,\,\,\,\, x\in \mathbb{R}^{n}\setminus B_R(0).
\end{cases}\end{equation}
By maximum principle, we deduce that for any $R>0$,
\begin{equation}\label{2c5}
  w_{1}^R(x)=(-\Delta)^{\frac{n}{2}-1}u(x)-v_{1}^{R}(x)>0, \,\,\,\,\,\,\, \forall \,\, x\in\mathbb{R}^{n}.
\end{equation}
Now, for each fixed $x\in\mathbb{R}^{n}$, letting $R\rightarrow\infty$ in \eqref{2c5}, we have
\begin{equation}\label{2c6}
(-\Delta)^{\frac{n}{2}-1}u(x)\geq\int_{\mathbb{R}^{n}}\frac{R_{2,n}}{|x-y|^{n-2}}f_{1}(u)(y)dy=:v_{1}(x)>0.
\end{equation}
Take $x=0$ in \eqref{2c6}, we get
\begin{equation}\label{2c7}
  \int_{\mathbb{R}^{n}}\frac{u^{p}(y)}{|y|^{n-2+a}}dy<+\infty.
\end{equation}
One can easily observe that $v_{1}\in C^{2}(\mathbb{R}^{n}\setminus\{0\})\cap C(\mathbb{R}^{n})$ is a solution of
\begin{equation}\label{2c8}
-\Delta v_{1}(x)=\frac{u^{p}(x)}{|x|^{a}},  \,\,\,\,\,\,\, x\in \mathbb{R}^n\setminus\{0\}.
\end{equation}
Define $w_{1}(x):=(-\Delta)^{\frac{n}{2}-1}u(x)-v_{1}(x)\in C^{2}(\mathbb{R}^{n}\setminus\{0\})\cap C(\mathbb{R}^{n})$. Then, by Lemma \ref{lemma1}, \eqref{PDE}, \eqref{2c6} and \eqref{2c8}, we have $w_{1}\in C^{2}(\mathbb{R}^{n})$ and satisfies
\begin{equation}\label{2c9}\\\begin{cases}
-\Delta w_{1}(x)=0, \,\,\,\,\,  x\in \mathbb{R}^n,\\
w_{1}(x)\geq0 \,\,\,\,\,\,  x\in \mathbb{R}^n.
\end{cases}\end{equation}
From Liouville theorem for harmonic functions, we can deduce that
\begin{equation}\label{2c10}
   w_{1}(x)=(-\Delta)^{\frac{n}{2}-1}u(x)-v_{1}(x)\equiv C_{1}\geq0.
\end{equation}
Therefore, we have
\begin{equation}\label{2c11}
  (-\Delta)^{\frac{n}{2}-1}u(x)=\int_{\mathbb{R}^{n}}\frac{R_{2,n}}{|x-y|^{n-2}}\frac{u^{p}(y)}{|y|^{a}}dy+C_{1}=:f_{2}(u)(x)>C_{1}\geq0.
\end{equation}

Next, for arbitrary $R>0$, let
\begin{equation}\label{2c12}
v_{2}^R(x):=\int_{B_R(0)}G_R(x,y)f_{2}(u)(y)dy.
\end{equation}
Then, we can get
\begin{equation}\label{2c13}\\\begin{cases}
-\Delta v_2^R(x)=f_{2}(u)(x),\ \ x\in B_R(0),\\
v_2^R(x)=0,\ \ \ \ \ \ \ x\in \mathbb{R}^{n}\setminus B_R(0).
\end{cases}\end{equation}
Let $w_2^R(x):=(-\Delta)^{\frac{n}{2}-2}u(x)-v_2^R(x)$. By Lemma \ref{lemma0}, \eqref{2c11} and \eqref{2c13}, we have
\begin{equation}\label{2c14}\\\begin{cases}
-\Delta w_2^R(x)=0,\ \ \ \ x\in B_R(0),\\
w_2^R(x)>0, \,\,\,\,\, x\in \mathbb{R}^{n}\setminus B_R(0).
\end{cases}\end{equation}
By maximum principle, we deduce that for any $R>0$,
\begin{equation}\label{2c15}
  w_2^R(x)=(-\Delta)^{\frac{n}{2}-2}u(x)-v_2^{R}(x)>0, \,\,\,\,\,\,\, \forall \,\, x\in\mathbb{R}^{n}.
\end{equation}
Now, for each fixed $x\in\mathbb{R}^{n}$, letting $R\rightarrow\infty$ in \eqref{2c15}, we have
\begin{equation}\label{2c16}
(-\Delta)^{\frac{n}{2}-2}u(x)\geq\int_{\mathbb{R}^{n}}\frac{R_{2,n}}{|x-y|^{n-2}}f_{2}(u)(y)dy=:v_{2}(x)>0.
\end{equation}
Take $x=0$ in \eqref{2c16}, we get
\begin{equation}\label{2c17}
  \int_{\mathbb{R}^{n}}\frac{C_{1}}{|y|^{n-2}}dy\leq\int_{\mathbb{R}^{n}}\frac{f_{2}(u)(y)}{|y|^{n-2}}dy<+\infty,
\end{equation}
it follows easily that $C_{1}=0$, and hence we have proved \eqref{2c1}, that is,
\begin{equation}\label{2c18}
  (-\Delta)^{\frac{n}{2}-1}u(x)=f_{2}(u)(x)=\int_{\mathbb{R}^{n}}\frac{R_{2,n}}{|x-y|^{n-2}}\frac{u^{p}(y)}{|y|^{a}}dy.
\end{equation}
One can easily observe that $v_{2}$ is a solution of
\begin{equation}\label{2c19}
-\Delta v_{2}(x)=f_{2}(u)(x),  \,\,\,\,\, x\in \mathbb{R}^n.
\end{equation}
Define $w_{2}(x):=(-\Delta)^{\frac{n}{2}-2}u(x)-v_{2}(x)$, then it satisfies
\begin{equation}\label{2c20}\\\begin{cases}
-\Delta w_{2}(x)=0, \,\,\,\,\,  x\in \mathbb{R}^n,\\
w_{2}(x)\geq0 \,\,\,\,\,\,  x\in\mathbb{R}^n.
\end{cases}\end{equation}
From Liouville theorem for harmonic functions, we can deduce that
\begin{equation}\label{2c21}
   w_{2}(x)=(-\Delta)^{\frac{n}{2}-2}u(x)-v_{2}(x)\equiv C_{2}\geq0.
\end{equation}
Therefore, we have proved that
\begin{equation}\label{2c22}
  (-\Delta)^{\frac{n}{2}-2}u(x)=\int_{\mathbb{R}^{n}}\frac{R_{2,n}}{|x-y|^{n-2}}f_{2}(u)(y)dy+C_{2}=:f_{3}(u)(x)>C_{2}\geq0.
\end{equation}
Through the same methods as above, we can prove that $C_{2}=0$, and hence
\begin{equation}\label{2c23}
  (-\Delta)^{\frac{n}{2}-2}u(x)=f_{3}(u)(x)=\int_{\mathbb{R}^{n}}\frac{R_{2,n}}{|x-y|^{n-2}}f_{2}(u)(y)dy.
\end{equation}
Continuing this way, defining
\begin{equation}\label{2c24}
  f_{k+1}(u)(x):=\int_{\mathbb{R}^{n}}\frac{R_{2,n}}{|x-y|^{n-2}}f_{k}(u)(y)dy
\end{equation}
for $k=1,2,\cdots,\frac{n}{2}$, then by Lemma \ref{lemma0} and induction, we have
\begin{equation}\label{2c25}
  (-\Delta)^{\frac{n}{2}-k}u(x)=f_{k+1}(u)(x)=\int_{\mathbb{R}^{n}}\frac{R_{2,n}}{|x-y|^{n-2}}f_{k}(u)(y)dy
\end{equation}
for $k=1,2,\cdots,\frac{n}{2}-1$, and
\begin{equation}\label{2c50}
  u(x)\geq f_{\frac{n}{2}+1}(u)(x)=\int_{\mathbb{R}^{n}}\frac{R_{2,n}}{|x-y|^{n-2}}f_{\frac{n}{2}}(u)(y)dy.
\end{equation}
In particular, it follows from \eqref{2c25} and \eqref{2c50} that
\begin{eqnarray}\label{2c51}
  && +\infty>(-\Delta)^{\frac{n}{2}-k}u(0)=\int_{\mathbb{R}^{n}}\frac{R_{2,n}}{|y|^{n-2}}f_{k}(u)(y)dy \\
 \nonumber &\geq& \int_{\mathbb{R}^{n}}\frac{R_{2,n}}{|y^{k}|^{n-2}}\int_{\mathbb{R}^{n}}\frac{R_{2,n}}{|y^{k}-y^{k-1}|^{n-2}}\cdots
  \int_{\mathbb{R}^{n}}\frac{R_{2,n}}{|y^{2}-y^{1}|^{n-2}}\frac{u^{p}(y^{1})}{|y^{1}|^{a}}dy^{1}dy^{2}\cdots dy^{k}
\end{eqnarray}
for $k=1,2,\cdots,\frac{n}{2}-1$, and
\begin{eqnarray}\label{formula}
  && +\infty>u(0)\geq\int_{\mathbb{R}^{n}}\frac{R_{2,n}}{|y|^{n-2}}f_{\frac{n}{2}}(u)(y)dy \\
 \nonumber &\geq& \int_{\mathbb{R}^{n}}\frac{R_{2,n}}{|y^{\frac{n}{2}}|^{n-2}}\left(\int_{\mathbb{R}^{n}}\frac{R_{2,n}}{|y^{\frac{n}{2}}-y^{\frac{n}{2}-1}|^{n-2}}\cdots
  \int_{\mathbb{R}^{n}}\frac{R_{2,n}}{|y^{2}-y^{1}|^{n-2}}\frac{u^{p}(y^{1})}{|y^{1}|^{a}}dy^{1}dy^{2}\cdots dy^{\frac{n}{2}-1}\right)dy^{\frac{n}{2}}.
\end{eqnarray}
From the properties of Riesz potential, we have the following formula (see \cite{Stein}), that is, for any $\alpha_{1},\alpha_{2}\in(0,n)$ such that $\alpha_{1}+\alpha_{2}\in(0,n)$, one has
\begin{equation}\label{2c26}
  \int_{\mathbb{R}^{n}}\frac{R_{\alpha_{1},n}}{|x-y|^{n-\alpha_{1}}}\cdot\frac{R_{\alpha_{2},n}}{|y-z|^{n-\alpha_{2}}}dy
=\frac{R_{\alpha_{1}+\alpha_{2},n}}{|x-z|^{n-(\alpha_{1}+\alpha_{2})}}.
\end{equation}
Now, by applying the formula \eqref{2c26} and direct calculations, we obtain that
\begin{eqnarray}\label{2c27}
  && \int_{\mathbb{R}^{n}}\frac{R_{2,n}}{|y^{\frac{n}{2}}-y^{\frac{n}{2}-1}|^{n-2}}\cdots
\int_{\mathbb{R}^{n}}\frac{R_{2,n}}{|y^{3}-y^{2}|^{n-2}}\cdot\frac{R_{2,n}}{|y^{2}-y^{1}|^{n-2}}dy^{2}\cdots dy^{\frac{n}{2}-1} \\
 \nonumber &=& \frac{R_{n-2,n}}{|y^{\frac{n}{2}}-y^{1}|^{2}}.
\end{eqnarray}

Now, we can deduce from \eqref{formula}, \eqref{2c27} and Fubini's theorem that
\begin{eqnarray}\label{contradiction}
  +\infty&>&u(0)\geq\int_{\mathbb{R}^{n}}\frac{R_{2,n}}{|y^{\frac{n}{2}}|^{n-2}}\left(\int_{\mathbb{R}^{n}}\frac{R_{n-2,n}}{|y^{\frac{n}{2}}-y^{1}|^{2}}\cdot
  \frac{u^{p}(y^{1})}{|y^{1}|^{a}}dy^{1}\right)dy^{\frac{n}{2}} \\
 \nonumber &=& \frac{1}{(2\pi)^{n}}\int_{\mathbb{R}^{n}}\frac{1}{|y|^{n-2}}\left(\int_{\mathbb{R}^{n}}\frac{1}{|y-z|^{2}}\cdot\frac{u^{p}(z)}{|z|^{a}}dz\right)dy.
\end{eqnarray}

We will get a contradiction from \eqref{contradiction}. Indeed, if we assume that $u$ is not identically zero, then by the integrability \eqref{2c7}, we have
\begin{equation}\label{2c60}
0<C_{0}:=\int_{\mathbb{R}^n}\frac{1}{|z|^{n-2}}\cdot\frac{u^p(z)}{|z|^a}dz<+\infty.
\end{equation}
For any given $|y|\geq 3$, if $|z|\geq\big(\ln|y|\big)^{-\frac{1}{n-2}}$, then one has immediately
\begin{equation}\label{2c61}
|y-z|\leq|y|+|z|\leq\left(|y|\big(\ln|y|\big)^{\frac{1}{n-2}}+1\right)|z|\leq 2|y|\big(\ln|y|\big)^{\frac{1}{n-2}}|z|.
\end{equation}
Thus it follows from \eqref{2c60} and \eqref{2c61} that, there exists a $R_{0}$ sufficiently large, such that, for any $|y|\geq R_{0}$, we have
\begin{eqnarray}\label{2c63}
\int_{\mathbb{R}^{n}}\frac{1}{|y-z|^2}\cdot\frac{u^p(z)}{|z|^a}dz&\geq& \frac{1}{4|y|^2\ln|y|}\int_{|z|\geq\big(\ln|y|\big)^{-\frac{1}{n-2}}}\frac{1}{|z|^{n-2}}\cdot\frac{u^p(z)}{|z|^a}dz \\
\nonumber &\geq&  \frac{C_{0}}{8|y|^2\ln|y|}.
\end{eqnarray}

Therefore, we can finally deduce from \eqref{contradiction} and \eqref{2c63} that
\begin{equation}\label{final}
 +\infty>u(0)\geq\frac{C_{0}}{8(2\pi)^{n}}\int_{|y|\geq R_{0}}\frac{1}{|y|^{n}\ln|y|}dy=+\infty,
\end{equation}
which is a contradiction! Therefore, we must have $u\equiv0$ in $\mathbb{R}^{n}$.

\emph{Case ii)} $a\geq2$. From Lemma \ref{lemma1} and \eqref{PDE}, we derive that $u\in C^{n}(\mathbb{R}^{n}\setminus\{0\})\cap C^{n-2}(\mathbb{R}^{n})$ and $u_{i}=(-\Delta)^{i} u\in C^{n-2i}(\mathbb{R}^{n}\setminus\{0\})\cap C^{n-2-2i}(\mathbb{R}^{n})$ ($i=1,\cdots,\frac{n}{2}-1$) form a positive solution to the following Lane-Emden-Hardy system
\begin{equation}\label{PDES}
\left\{{\begin{array}{l} {-\Delta u_{\frac{n}{2}-1}(x)=\frac{u^{p}(x)}{|x|^{a}}, \,\,\,\,\,\, x\in\mathbb{R}^{n}\setminus\{0\}},\\  {} \\ {-\Delta u_{\frac{n}{2}-2}(x)=u_{\frac{n}{2}-1}(x), \,\,\,\,\,\, x\in\mathbb{R}^{n}}, \\ \cdots\cdots \\ {-\Delta u(x)=u_1(x), \,\,\,\,\,\, x\in\mathbb{R}^{n}}. \\ \end{array}}\right.
\end{equation}
Since $u\in C^{n}(\mathbb{R}^{n}\setminus\{0\})\cap C^{n-2}(\mathbb{R}^{n})$, $u>0$, $u_{i}>0$, $\Delta u<0$ and $\Delta u_{i}<0$ for $|x|>0$, by direct calculations, we can deduce that
\begin{equation}\label{2-3-17}
  \frac{d}{dr}\overline{u}(r)\leq0, \,\,\,\,\,\,\,\,\, \frac{d}{dr}\overline{u_{i}}(r)\leq0 \,\,\,\,\,\,\,\,\,\,\,\, \text{for any} \,\,\, 0<r<\infty.
\end{equation}
By taking the spherical average of equations of \eqref{PDES}  with respect to the center $0$ and Jensen's inequality, we have
\begin{equation}\label{2-3-18}
  \overline{u}''(r)+\frac{n-1}{r}\overline{u}'(r)+\overline{u_{1}}(r)=0, \,\,\,\,\,\,\,\, \overline{u_{i}}''(r)+\frac{n-1}{r}\overline{u_{i}}'(r)+\overline{u_{i+1}}(r)=0,
   \,\,\,\,\,\,\,\,\, \forall \,\, r\geq0,
\end{equation}
\begin{equation}\label{2-3-19}
  \overline{u_{\frac{n}{2}-1}}''(r)+\frac{n-1}{r}\overline{u_{\frac{n}{2}-1}}'(r)+r^{-a}\overline{u}^{p}(r)\leq0, \,\,\,\,\,\,\,\,\, \forall \,\, r>0.
\end{equation}
Thus we can infer from \eqref{2-3-17} and \eqref{2-3-19} that, for any $0<r\leq1$,
\begin{equation}\label{2-3-20}
  -\overline{u_{\frac{n}{2}-1}}'(r)\geq r^{1-n}\int_{0}^{r}s^{n-1-a}\overline{u}^{p}(s)ds\geq c^pr^{1-n}\int_{0}^{r}s^{n-1-a}ds,
\end{equation}
where $c:=\min_{|x|\leq1}u(x)>0$. For $2\leq a<n$, one can deduce further from \eqref{2-3-20} that
\begin{equation}\label{2-3-21}
  -\overline{u_{\frac{n}{2}-1}}'(r)\geq \frac{c^p}{n-a}r^{1-a}, \,\,\,\,\,\,\,\, \forall \,\, 0<r\leq1.
\end{equation}
Integrating both sides of \eqref{2-3-21} from $0$ to $1$ yields that
\begin{eqnarray}\label{2-3-22}
  (-\Delta)^{\frac{n}{2}-1}u(0)=\overline{u_{\frac{n}{2}-1}}(0)&\geq&\overline{u_{\frac{n}{2}-1}}(1)
  +\frac{c^p}{n-a}\int_{0}^{1}s^{1-a}ds \\
  \nonumber &\geq& \frac{c^p}{n-a}\int_{0}^{1}s^{1-a}ds=+\infty,
\end{eqnarray}
which is a contradiction! Therefore, we must have $u\equiv0$ in $\mathbb{R}^{n}$.

This concludes the proof of Theorem \ref{Thm0}.


\begin{thebibliography}{99}

\bibitem{BG} M. F. Bidaut-V\'{e}ron and H. Giacomini, {\it A new dynamical approach of Emden-Fowler equations and systems}, Adv. Differential Equations, \textbf{15} (2010), no. 11-12, 1033-1082.

\bibitem{CD} D. Cao and W. Dai, {\it Classification of nonnegative solutions to a bi-harmonic equation with Hartree type nonlinearity}, to appear in Proc. Royal Soc. Edinburgh-A: Math., 2018, 13pp.

\bibitem{CF} W. Chen and Y. Fang, {\it Higher order or fractional order Hardy-Sobolev type equations}, Bull. Inst. Math. Acad. Sin. (N.S.), \textbf{9} (2014), no. 3, 317-349.

\bibitem{CFY} W. Chen, Y. Fang and R. Yang, {\it Liouville theorems involving the fractional Laplacian on a half space}, Adv. Math., \textbf{274} (2015), 167-198.

\bibitem{CGS} L. Caffarelli, B. Gidas, J. Spruck, {\it Asymptotic symmetry and local behavior of semilinear elliptic equation with critical Sobolev growth}, Comm. Pure Appl. Math., \textbf{42} (1989), 271-297.

\bibitem{CL} W. Chen and C. Li, {\it Classification of solutions of some nonlinear elliptic equations}, Duke Math. J., \textbf{63} (1991), no. 3, 615-622.

\bibitem{CL1} W. Chen and C. Li, {\it Methods on Nonlinear Elliptic Equations}, AIMS Book Series on Diff. Equa. and Dyn. Sys., Vol. 4, 2010.

\bibitem{CL2} W. Chen and C. Li, {\it Super polyharmonic property of solutions for PDE systems and its applications}, Comm. Pure Appl. Anal., \textbf{12} (2013), 2497-2514.

\bibitem{CLin} C. Chen and C. Lin, {\it Local behavior of singular positive solutions of semilinear elliptic equations with Sobolev exponent}, Duke Math. J., \textbf{78} (1995), no. 2, 315-334.

\bibitem{CLL} W. Chen, C. Li and Y. Li, {\it A direct method of moving planes for the fractional Laplacian}, Adv. Math., \textbf{308} (2017), 404-437.

\bibitem{CLO} W. Chen, C. Li and B. Ou, {\it Classification of solutions for an integral equation}, Comm. Pure Appl. Math., \textbf{59} (2006), 330-343.

\bibitem{CY} S. A. Chang and P. C. Yang, {\it On uniqueness of solutions of $n$-th order differential equations in conformal geometry}, Math. Res. Lett., \textbf{4} (1997), 91-102.

\bibitem{CZ} W. Chen and R. Zhang, {\it Classification of solutions and nonlocal curvatures on conformally flat manifolds}, preprint, 2018, 26pp.

\bibitem{DFHQW} W. Dai, Y. Fang, J. Huang, Y. Qin and B. Wang, {\it Regularity and classification of solutions to static Hartree equations involving fractional Laplacians}, Disc. Cont. Dyn. Sys. - A, 2018, 15pp,  doi: 10.3934/dcds.2018117.

\bibitem{DFQ} W. Dai, Y. Fang and G. Qin, {\it Classification of positive solutions to fractional order Hartree equations via a direct method of moving planes}, J. Diff. Equations, \textbf{265} (2018), 2044-2063.

\bibitem{DQ1} W. Dai and G. Qin, {\it Classification of nonnegative classical solutions to third-order equations}, Adv. Math., \textbf{328} (2018), 822-857.

\bibitem{DQ} W. Dai and G. Qin, {\it Liouville type theorems for Hardy-H\'{e}non equations with concave nonlinearities}, preprint, submitted for publication, 2018, 12pp.

\bibitem{FG} M. Fazly and N. Ghoussoub, {\it On the H\'{e}non-Lane-Emden conjecture}, Discrete Contin. Dyn. Syst. - A, \textbf{34} (2014), no. 6, 2513-2533.

\bibitem{GNN1} B. Gidas, W. Ni and L. Nirenberg, {\it Symmetry and related properties via maximum principle}, Comm. Math. Phys., \textbf{68} (1979), 209-243.

\bibitem{GNN} B. Gidas, W. Ni and L. Nirenberg, {\it Symmetry of positive solutions of nonlinear elliptic equations in $\mathbb{R}^{n}$}, Mathematical Analysis and Applications, vol. 7a of the book series Advances in Mathematics, Academic Press, New York, 1981.

\bibitem{GS} B. Gidas and J. Spruck, {\it Global and local behavior of positive solutions of nonlinear elliptic equations}, Comm. Pure Appl. Math., \textbf{34} (1981), no. 4, 525-598.

\bibitem{GS1} B. Gidas and J. Spruck, {\it A priori bounds for positive solutions of nonlinear elliptic equations}, Comm. PDE, \textbf{6} (1981), no. 8, 883-901.

\bibitem{Lin} C. Lin, {\it A classification of solutions of a conformally invariant fourth order equation in $\mathbb{R}^{n}$}, Comment. Math. Helv., \textbf{73} (1998), 206-231.

\bibitem{M} E. Mitidieri, {\it Nonexistence of positive solutions of semilinear elliptic systems in $\mathbb{R}^{N}$}, Differential Integral Equations, \textbf{9} (1996), 465-479.

\bibitem{MP} \`{E}. Mitidieri and S. I. Pohozaev, {\it A priori estimates and the absence of solutions of nonlinear partial differential equations and inequalities}, Tr. Mat. Inst. Steklova, \textbf{234} (2001), 1-384.

\bibitem{P} Q. Phan, {\it Liouville-type theorems for polyharmonic H\'{e}non-Lane-Emden system}, Adv. Nonlinear Stud., \textbf{15} (2015), no. 2, 415-432.

\bibitem{PQS} P. Pol\'{a}\v{c}ik, P. Quittner and P. Souplet, {\it Singularity and decay estimates in superlinear problems via Liouville-type theorems. Part I: Elliptic systems}, Duke Math. J., \textbf{139} (2007), 555-579.

\bibitem{PS} Q. Phan and P. Souplet, {\it Liouville-type theorems and bounds of solutions of Hardy-H\'{e}non equations}, J. Diff. Equations, \textbf{252} (2012), 2544-2562.

\bibitem{S} P. Souplet, {\it The proof of the Lane-Emden conjecture in four space dimensions}, Adv. Math., \textbf{221} (2009), no. 5, 1409-1427.

\bibitem{Stein} E. M. Stein, {\it Singular integrals and differentiability properties of functions}, Princeton Landmarks in Mathematics, Princeton University Press, Princeton, New Jersey, 1970.

\bibitem{SZ} J. Serrin and H. Zou, {\it Non-existence of positive solutions of Lane-Emden systems}, Diff. Integral Equations, \textbf{9} (1996), no. 4, 635-653.

\bibitem{WX} J. Wei and X. Xu, {\it Classification of solutions of higher order conformally invariant equations}, Math. Ann., \textbf{313} (1999), no. 2, 207-228.

\bibitem{Zhu} N. Zhu, {\it Classification of solutions of a conformally invariant third order equation in $\mathbb{R}^{3}$}, Comm. PDE, \textbf{29} (2004), 1755-1782.

\end{thebibliography}
\end{document}